\newcommand{\comment}[1]{}
\newtheorem{theorem}{Theorem}
\newtheorem{lemma}{Lemma}
\newtheorem{proposition}{Proposition}
\newtheorem{definition}{Definition}
\begin{document}

\title{Perfect simulation of autoregressive models with infinite memory}
\author{ Emilio De Santis \\
{\small Dipartimento di Matematica }\\
{\small La Sapienza Universit\`a di Roma}\\
{\small \texttt{desantis@mat.uniroma1.it}} \and Mauro Piccioni \\
{\small Dipartimento di Matematica }\\
{\small La Sapienza Universit\`a di Roma}\\
{\small \texttt{piccioni@mat.uniroma1.it}}}
\maketitle

\begin{abstract}
In this paper we consider the problem of determining the law of
binary stochastic processes from transition kernels depending on the
whole past. These kernels are linear in the past values of the
process. They are allowed to assume values close to both $0$ and
$1$, preventing the application of usual results on uniqueness. More
precisely we give sufficient conditions for uniqueness and
non-uniqueness. In the former case a perfect simulation algorithm is
also given.
\end{abstract}

\medskip

\medskip \noindent \textbf{keywords:} Perfect simulation, Coupling, Chains
with complete connections.

AMS classification: 60G99, 68U20, 60J10.

\section{Introduction and main definitions}

In this paper we consider processes indexed by $ \mathbb{Z}$ taking
values in a finite alphabet $G$. These processes are constructed
from a transition kernel which depends on the whole past, i.e. a map
$p : G \times G^{-\mathbb{N}_+} \to [0,1]$ such that, for any choice
of $\mathbf {w}_{-\infty}^{-1}=(w_{-1},w_{-2},\ldots) \in
G^{-\mathbb{N}_+}$, $p(\cdot| \mathbf {w}_{-\infty}^{-1} )$ is a
probability measure on $G$. For the basic setting of the theory of
these processes we address the reader to
\cite{CFF}  where an historical overview is also given. In
particular we recall the following
\begin{definition}\label{DLR}
A $G$-valued process $ \mathbf{X}=
\{X_n, n \in \mathbb{Z}\} $ defined on an arbitrary probability
space is \emph{compatible}
with the transition kernel $p$ if for any $g \in G$
\begin{equation}\label{compat}
    P( X_n =g |\mathbf{X}_{-\infty}^{n-1})
    =p(g |\mathbf{X}_{-\infty}^{n-1}),
\end{equation}
almost surely, for any $ n \in \mathbb{Z}$.
\end{definition}
It is clear that the compatibility condition
refers only to the law of $\mathbf{X}$, so we are allowed to speak about
compatible laws, whose set will be denoted by $\mathcal{G}(p)$. When $|\mathcal{G}(p)|=1$
the law of any compatible process $\mathbf{X}$ is always the same in which case
we say that uniqueness holds for $p$.

It is immediately seen that $ \mathcal{G}(p)$ is convex. Moreover,
if the kernel $p$ is continuous w.r.t. the product topology in the second coordinate, then $ \mathcal{G}(p)$
is non empty (see
\cite{FM05}). In this paper we will establish sufficient conditions
for uniqueness for a certain class of continuous kernels, giving in
these cases an explicit construction for the unique compatible
process.

In the sequel we will consider binary processes taking values in $G=\{-1,+1\}$.
The transition kernels we are interested in have the form
\begin{equation}\label{specif}
    p^{\mathbf{\theta}}(g|\mathbf {w}_{-\infty}^{-1} )= \frac{1}{2} +
    \frac{g}{2} \sum_{k=1}^\infty \theta_k w_{-k} , \,\,\, g \in G,
    \mathbf {w}_{-\infty}^{-1} \in G^{-\mathbb{N}_+},
\end{equation}
where $\mathbf{\theta}=\{\theta_k, k \in \mathbb{N}_+\}$ is a sequence of constants with the
property $\sum_{k=1}^\infty |\theta_k | =1  $. A kernel of this form is clearly continuous,
since by specifying some large but finite
portion of $\mathbf {w}_{-\infty}^{-1}$, the oscillation of $p^{\mathbf{\theta}}(g|\cdot)$ can
be made arbitrarily small. Moreover, since $ p^{\mathbf{\theta}}(g|\mathbf {w}_{-\infty}^{-1} )
= p^{\mathbf{\theta}}(-g|-\mathbf {w}_{-\infty}^{-1} ) $, $ \mathcal{G}(p^{\mathbf{\theta}})$ is invariant under
the transformation changing the sign to the whole configuration.
Therefore, if $| \mathcal{G}(p^{\mathbf{\theta}})| =1 $ and $ \{X_n\}$ is
the unique compatible process, $P( X_n =\pm 1 ) =1/2 $ for any integer $n \in \mathbb{Z}$.

Binary linear kernels were first introduced in \cite{CFF}, as simple
examples of transition kernels depending on the whole past. More
precisely they added a 
\textquotedblleft bias", that is an extra term $\frac
{\theta_0}{2}$, with $ \theta_0 \in [-1, 1]$; if a bias term is
present the model is well defined provided $\sum_{k=0}^\infty
|\theta_k |\leq 1$. But then they got $\sum_{k=1}^\infty |\theta_k
|< 1$, in which case uniqueness always holds. In fact the unique
compatible process can be constructed by means of a simple perfect
simulation algorithm, to which we will briefly refer in the last
section. Alternatively, as in \cite{FM05}, in this case one can
prove uniqueness by means of Dobrushin's contraction condition.

Thus only when $\theta_0 =0$ the choice $\sum_{k=1}^\infty |\theta_k
|=1$ is allowed. As a consequence the convex hull of the range space of
$p^{\mathbf{\theta}}(1|\cdot)$ (and of
$p^{\mathbf{\theta}}(-1|\cdot)$) is the whole interval $[0,1]$.
Therefore there is no uniform positivity condition for this kind of
kernels, preventing the applicability of general criteria for
uniqueness (see \cite{FM05}). Indeed uniqueness does not always
hold. The simplest situation in which uniqueness does not hold is
when $\theta_k \geq 0$ for all $k \in \mathbb{N}_+$. Then it is easy to
check that the two constant processes constantly equal to $+1$ and
$-1$ are compatible. More generally we can establish the following

\begin{theorem}\label{alterna} Suppose that
\begin{itemize}

\item[a)] either $\theta_k \geq 0$ for all $k \in \mathbb{N}_+$,

\item[b)] or $\theta_k \leq 0$ for $k$ odd and $\theta_k \geq 0$ for $k$
even.
\end{itemize}

Then uniqueness does not hold for $p^{\mathbf{\theta}}$.
\end{theorem}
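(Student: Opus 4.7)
The plan is to exhibit, in each of the two cases, two distinct deterministic (hence trivially distinct) laws in $\mathcal{G}(p^{\theta})$. Since for any deterministic process $\mathbf{X}=\mathbf{x}$, the compatibility condition \eqref{compat} reduces to the requirement that $p^{\theta}(x_n \mid \mathbf{x}_{-\infty}^{n-1})=1$ for every $n\in\mathbb{Z}$, everything will come down to computing the inner sum in \eqref{specif} on two suitably chosen configurations.

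For case (a) I take the two constant configurations $x_n\equiv +1$ and $x_n\equiv -1$. Since $\theta_k\geq 0$ for every $k$, one has $\sum_{k=1}^\infty \theta_k=\sum_{k=1}^\infty|\theta_k|=1$, so plugging $w_{-k}\equiv +1$ into \eqref{specif} gives $p^{\theta}(+1\mid\mathbf{w}_{-\infty}^{-1})=\tfrac12+\tfrac12\sum_k\theta_k=1$, and symmetrically for the all-$(-1)$ configuration. Both constant laws therefore belong to $\mathcal{G}(p^{\theta})$.

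For case (b) I take the two alternating configurations $x_n=(-1)^n$ and $x_n=(-1)^{n+1}$, and it suffices to check compatibility for the first one (the second being its sign-flip, already known to give another compatible law by the symmetry of $\mathcal{G}(p^{\theta})$ noted before the theorem). The key observation is that with the sign pattern assumed in (b), one has $\theta_k(-1)^k=|\theta_k|$ for every $k\in\mathbb{N}_+$: indeed, if $k$ is odd then $\theta_k\leq 0$ and $(-1)^k=-1$, while if $k$ is even then $\theta_k\geq 0$ and $(-1)^k=+1$, both yielding $|\theta_k|$. Hence, at a time $n$ with $n$ even, one has $w_{-k}=x_{n-k}=(-1)^{n-k}=(-1)^k$ and
\begin{equation*}
  p^{\theta}\bigl(+1\bigm|\mathbf{x}_{-\infty}^{n-1}\bigr)=\tfrac12+\tfrac12\sum_{k=1}^\infty\theta_k(-1)^k=\tfrac12+\tfrac12\sum_{k=1}^\infty|\theta_k|=1,
\end{equation*}
which is exactly $x_n=+1$. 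For $n$ odd the past signs are reversed and the same computation yields $p^{\theta}(-1\mid\mathbf{x}_{-\infty}^{n-1})=1$, matching $x_n=-1$.

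There is essentially no obstacle here beyond the sign bookkeeping in case (b); the whole point of the statement is to pick exactly those two sign patterns under which the linear combination $\sum_k\theta_k w_{-k}$ saturates the bound $1$ in absolute value, so that the kernel becomes deterministic along two different trajectories and produces two distinct elements of $\mathcal{G}(p^{\theta})$.
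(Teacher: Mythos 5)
Your proof is correct and is essentially the paper's own argument: the paper introduces the general notion of a \emph{coherent} configuration (Definition \ref{LDR}) and shows any such configuration is a degenerate compatible law (Proposition \ref{coherentstates}, whose key computation is exactly your identity $\sum_k \theta_k x_{n-k}x_n=\sum_k|\theta_k|=1$), then observes that the constants and the checkerboards are coherent in cases a) and b) respectively (Proposition \ref{coefficienti}). You simply bypass the intermediate abstraction and verify the two specific families directly, which is a perfectly valid shortcut for this statement.
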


On the positive side, our main result is the following sufficient condition for uniqueness, which
will be supplemented by a perfect simulation algorithm producing the unique compatible process.
In order to state it, we set
$ A_{\mathbf{\theta}} = \{ k \in \mathbb{N}_+: \theta_k \neq 0\}$.

\begin{theorem}\label{unico}
Suppose that all the following assumptions hold:
\begin{itemize}

\item[i)]  $\gcd(A_{\mathbf{\theta}})=1$;

\item[ii)] there exists either an even integer $m \in \mathbb{N}_+$ with $\theta_m<0$, or an odd
integer $m_1$ with $\theta_{m_1}<0$ and another odd integer $m_2$
with $\theta_{m_2}>0$;

\item[iii)] $\sum_{k=1}^{\infty} (\sum_{m=k}^{\infty} |\theta_m|)^2<+\infty$.
\end{itemize}
Then uniqueness holds for $p^{\mathbf{\theta}}$.
\end{theorem}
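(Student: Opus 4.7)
The plan is to construct the unique compatible process by a perfect simulation algorithm based on the mixture representation
$p^\theta(g\mid w) = \sum_{k=1}^\infty |\theta_k|\,\mathbf{1}\bigl\{g = \sigma(\theta_k)\,w_{-k}\bigr\},$
where $\sigma$ is the sign function. This displays $X_n$ as a deterministic function of the past once a ``lookback index'' $K_n\sim|\theta|$ is drawn: $X_n = \sigma(\theta_{K_n})\,X_{n-K_n}$. I would then iterate the recurrence backwards from $n$, producing the trajectory $A_0=n$, $A_{i+1}=A_i - K_{A_i}$, with $X_n = \Pi_i X_{A_i}$ and $\Pi_i = \prod_{j<i}\sigma(\theta_{K_{A_j}})$. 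Under hypothesis (i) the increment law $|\theta|$ is aperiodic on $\mathbb{N}_+$, so by the renewal theorem the backward walk visits every sufficiently far negative integer a.s., and uniqueness reduces to showing that the boundary term $X_{A_i}$ ceases to matter as $i\to\infty$.

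Next, I would analyze the auxiliary chain $(A_i\bmod 2,\Pi_i)$ on $\{0,1\}\times\{\pm 1\}$. Hypothesis (ii) is exactly what rules out the two patterns of Theorem~\ref{alterna}: either (ii.a) supplies a negative even-index coefficient, letting the sign flip while parity is preserved; or (ii.b) supplies two odd-index coefficients of opposite signs, letting the sign be flipped independently of parity in two steps. Combined with (i), this yields irreducibility of the chain on its single recurrent class, with uniform invariant measure; in words, the running sign $\Pi_i$ equilibrates to a fair coin and decorrelates from the remote past.

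Finally, for uniqueness I would couple two stationary compatible laws realised as $X$ and $Y$ by maximally coupling $X_n$ with $Y_n$ at each $n$ conditionally on their respective pasts. Setting $M_n = (X_n-Y_n)/2\in\{-1,0,1\}$, one computes
$E[M_n\mid\text{pasts}] = \sum_k \theta_k M_{n-k}, \qquad E[M_n^2\mid\text{pasts}] = \bigl|\textstyle\sum_k \theta_k M_{n-k}\bigr|,$
so the stationary covariance $c(j)=E[M_n M_{n+j}]$ satisfies a Yule--Walker type recursion. Truncating the sum at level $k_0$ and applying Cauchy--Schwarz, the tail error is controlled by $\bigl(\sum_{m\ge k_0}|\theta_m|\bigr)^2$, whose sum over $k_0$ converges by (iii); together with the sign mixing from the previous step, the truncated contribution cannot maintain a systematic sign, and iterating the estimate forces $c(0)=E[M_n^2]=0$, i.e.\ $X=Y$ a.s.

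The main obstacle I anticipate is that, because $\sum_k|\theta_k|=1$, there is no ``free'' no-memory component in $p^\theta$ that would let a CFF-style scheme terminate with a finite ancestry, and correspondingly the naive one-step maximal-coupling bound $E[D_n\mid\text{past}]\le\sum_k|\theta_k|D_{n-k}$ collapses, under stationarity, to the tautology $d\le d$. The real work is to convert the sign mixing from (ii) into a genuine contraction on the second moments of $M_n$, with (iii) providing the quantitative $\ell^2$ control needed; implementing that contraction carefully --- especially when the step distribution $|\theta|$ may have infinite mean --- is the step I expect to be most delicate.
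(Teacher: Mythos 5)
Your mixture representation and backward ancestral walk $A_{i+1}=A_i-K_{A_i}$ are exactly the paper's starting point, and your reading of hypothesis (ii) as sign equilibration (the running sign $\Pi_i$ becoming a fair coin) is the right idea and close in spirit to the paper's key lemma, which shows $\sup_{\mathbf w}|P(X_0^{-n,\mathbf w}=1)-\tfrac12|\to 0$. But there are two genuine gaps. First, the claim that aperiodicity plus the renewal theorem makes a single backward walk ``visit every sufficiently far negative integer a.s.'' is false: a renewal process with step law $|\theta|$ visits a distant site only with probability tending to $1/E(K)$, which is $0$ when the mean is infinite --- a case you explicitly allow. More importantly, what uniqueness actually requires is that the backward ancestries started from \emph{two distinct} sites a.s.\ coalesce, so that an entire finite window becomes a deterministic functional of a single ancestral sign. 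This is precisely where hypothesis (iii) enters in the paper: updating always the rightmost of the two walks, their distance evolves as the reflected chain $Y_{n+1}=|Y_n-K_{n+1}|$ (the von Schelling process), and (iii) is exactly the Peign\'e--Woess condition $\sum_k P(K\ge k)^2<\infty$ guaranteeing its recurrence, hence a.s.\ connectivity of the random ancestral forest. Your proposal never addresses coalescence at all, and instead spends (iii) on an $\ell^2$ tail truncation in a covariance recursion, which is not its role.

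Second, the closing argument is not a proof. The Yule--Walker recursion for $c(j)=E[M_nM_{n+j}]$ is set up only for \emph{stationary} compatible laws, whereas uniqueness must be established within all of $\mathcal{G}(p^{\mathbf\theta})$; and, as you yourself concede, the step converting sign mixing into a contraction on second moments is exactly the part you have not supplied --- the one-step bound collapses to $d\le d$ because $\sum_k|\theta_k|=1$, and no quantitative mechanism is given to break that tautology. The paper closes the argument differently: once coalescence holds, the window values equal a fixed $\mathcal{F}$-measurable vector times the sign at the coalescence root $T_{[l,L]}$, and the equilibration lemma (proved carefully by exhibiting two oriented paths $\gamma^\pm$ of opposite sign with common endpoints, counting their occurrences along the ancestral path, and using that the parity of a Binomial$(k,p)$ variable tends to a fair coin) shows this root sign is asymptotically symmetric uniformly over boundary conditions, which kills the dependence on the past in total variation. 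Replacing your third step by this coalescence-plus-equilibration scheme is what is needed to make the argument close.
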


We briefly comment the relation among Theorem \ref{alterna} and
Theorem \ref{unico}. Condition i) in Theorem~\ref{unico} is
necessary for uniqueness (Proposition \ref{lemma:k0}); condition ii)
is the negation of assumptions a) and b) of Theorem \ref{alterna}.
At the present time only condition iii)  is preventing the statement
of a set of necessary and sufficient conditions for uniqueness. We
will briefly comment about this within the conclusions.

In Section 2 we will prove Theorem \ref{alterna} and related
results. In Section 3 we will prove Theorem \ref{unico} and related
results. In Section 4 we give the simulation algorithm explicitly
with a few conclusive remarks.

\section{Coherent sequences and nonuniqueness}

The set $A=A_{\mathbf{\theta}}$ allows to define an oriented graph
structure $\Gamma=( \mathbb{Z}, \mathcal{A} ) $ over the set of
integers. There is an arc $ (n,m) \in \mathcal{A}$ if $ m < n $ and
$n-m \in A$. Such an arc is marked by the sign of $\theta_{ n-m}$.
It is clear that, by its very definition, an arc in $\Gamma$ can be
always shifted, since if $ (n,m) \in \mathcal{A}$, then $ (n+k,m+k)
\in \mathcal{A}$, for any $k \in \mathbb{Z}$. An oriented path is a
finite sequence $\{ (n_i , n_{i+1}) \in \mathcal{A}, i = 0,\ldots ,
l-1 \}$. This path joins $n_0$ with $n_l$. Each oriented path has a
sign, which is given by the product $\prod_{i=0}^{l-1}
\hbox{sign}(\theta_{n_i-n_{i+1}})$ of the signs of all its arrows.

By neglecting the orientation of the arrows in $\mathcal{A}$ we
obtain the set of edges $\widetilde{\mathcal{A}}$ of the undirected
graph $\widetilde{\Gamma}$ which is called the symmetrization of
$\Gamma$. In this framework a path is a finite sequence $\{\{n_i ,
n_{i+1}\} \in\widetilde{\mathcal{A}}, i=1,\ldots,l \}$. The sign of
the path is given by $\prod_{i=0}^{l-1}
\hbox{sign}(\theta_{|n_i-n_{i+1}|})$. Here the endpoints $n_0$ and
$n_l$ joined by the path could coincide, but in this case we prefer
to use the word cycle rather than path. By convention for the path
we always take $n_0>n_l$, unless when specified otherwise, but we
say that an edge $\{n_i , n_{i+1}\}$ is traveled in the opposite
direction when $(n_{i+1},n_i) \in \mathcal{A}$, therefore $n_i <
n_{i+1}$. By ordering the edges of the path $\{\{n_i , n_{i+1}\}
\in\widetilde{\mathcal{A}}, i=1,\ldots,l \}$ by decreasing rather
than increasing the index $i$ the same path is traveled in the
direction which is opposite to the previous one. Finally, as arcs
can be shifted in $\mathcal{A}$, a whole path can be shifted in
$\widetilde{\mathcal{A}}$ in a natural way.


By B\'ezout's identity the group generated by $A$ is $\gcd (A)
\times \mathbb{Z} $, and the connected components of
$\widetilde{\Gamma}$ coincide with the congruence classes
$\emph{mod}\gcd (A)$. In particular, when $\gcd (A) =1$ the graph
$\widetilde{\Gamma}$ is connected. This means that for any pair of
sites $n, m \in \mathbb{Z}$ there exists a path joining them.

\begin{proposition} \label{lemma:k0}
The condition $\gcd (A_{\mathbf{\theta}})=1$ is necessary for $ | \mathcal {G}(p^{\mathbf{\theta}})|
=1$.
\end{proposition}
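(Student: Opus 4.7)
The plan is to prove the contrapositive: assume $d := \gcd(A_{\mathbf{\theta}}) \geq 2$ and exhibit two distinct elements of $\G(p^{\mathbf{\theta}})$.

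The first step is structural. By definition of $d$ one has $\theta_k = 0$ whenever $d \nmid k$, so that
\[
p^{\mathbf{\theta}}(g \mid \mathbf{w}_{-\infty}^{-1}) = \frac{1}{2} + \frac{g}{2}\sum_{j \geq 1} \tilde\theta_j\, w_{-jd}, \qquad \tilde\theta_j := \theta_{jd},
\]
and the kernel depends on the past only through positions in the residue class of $n$ modulo $d$. The sequence $\tilde{\mathbf{\theta}} := (\tilde\theta_j)_{j \geq 1}$ satisfies $\sum_j |\tilde\theta_j| = 1$, so $p^{\tilde{\mathbf{\theta}}}$ is a continuous linear kernel of the same form as $p^{\mathbf{\theta}}$; by the existence result cited from \cite{FM05}, $\G(p^{\tilde{\mathbf{\theta}}})$ is non-empty.

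The main construction is interleaving. Given any $d$-tuple $(\lambda_0, \ldots, \lambda_{d-1})$ of elements of $\G(p^{\tilde{\mathbf{\theta}}})$, take $d$ jointly independent processes $Y^{(0)}, \ldots, Y^{(d-1)}$ with $Y^{(j)}$ of law $\lambda_j$, and define
\[
X_{rd + j} := Y^{(j)}_r, \qquad r \in \Z,\ j \in \{0, \ldots, d-1\}.
\]
Compatibility of $X$ with $p^{\mathbf{\theta}}$ is a direct check: at time $n = rd + j$ the kernel reads only positions $(r - m)d + j$ lying in the $j$-th subsequence, and joint independence of the $Y^{(j)}$'s reduces the conditional distribution of $X_n$ given the full past of $X$ to that of $Y^{(j)}_r$ given its own past, which equals $p^{\tilde{\mathbf{\theta}}}(g \mid \cdot)$ by compatibility of $Y^{(j)}$.

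It remains to produce two distinct interleaved laws. In the generic case where $\G(p^{\tilde{\mathbf{\theta}}})$ contains two distinct elements $\lambda_1 \neq \lambda_2$, the constant tuples $(\lambda_1, \ldots, \lambda_1)$ and $(\lambda_2, \ldots, \lambda_2)$ immediately yield two distinct elements of $\G(p^{\mathbf{\theta}})$. The main obstacle is the degenerate case $|\G(p^{\tilde{\mathbf{\theta}}})| = 1$, in which every interleaving of independent subsequences coincides; here a second compatible law must be produced via a non-independent coupling of the subsequences, and I expect this to be the most delicate point of the argument. Naive couplings --- for instance identifying two subsequences --- break compatibility by forcing $X_n$ to be past-measurable at places where the kernel is non-deterministic, so a resolution would likely exploit the sign symmetry $p^{\tilde{\mathbf{\theta}}}(g|w) = p^{\tilde{\mathbf{\theta}}}(-g|-w)$ to build a symmetric coupling that respects the conditional-independence constraints imposed by the kernel.
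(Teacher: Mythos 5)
Your proposal follows the same route as the paper's own proof: argue the contrapositive, pass to the rescaled kernel $p^{\theta^*}$ with $\theta^*_j=\theta_{jd}$, and interleave compatible processes along the $d$ congruence classes. Your verification that the interleaving of \emph{independent} copies is $p^{\mathbf{\theta}}$-compatible is correct, and is in fact more careful than the paper's, which asserts that \emph{any} process whose subsequences each have the law of a $p^{\theta^*}$-compatible process is itself compatible --- an assertion that fails in general, for exactly the reason you give (identifying two subsequences makes $X_n$ measurable with respect to its past at sites where the kernel is nondegenerate).

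However, as you yourself acknowledge, your argument is incomplete in the case $|\mathcal{G}(p^{\theta^*})|=1$, and this is a genuine gap rather than a removable technicality. When $\mathcal{G}(p^{\theta^*})$ is a singleton, its unique element $\lambda$ is nondeterministic (a deterministic compatible law $\delta_{\mathbf{s}}$ always comes with its mirror image $\delta_{-\mathbf{s}}$, contradicting uniqueness), every compatible law for $p^{\mathbf{\theta}}$ then has all $d$ subsequences marginally equal to this single $\lambda$, and a second element of $\mathcal{G}(p^{\mathbf{\theta}})$ can only arise from a nonproduct coupling that still satisfies the conditional-independence constraints imposed by the kernel --- precisely what you leave open. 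The paper disposes of this case with the single remark that one may take the subsequences ``either equal or independent''; the ``equal'' option is the one your own discussion correctly rules out, and no other coupling is exhibited there. So your proof stops exactly at the step where the paper's proof is a bare assertion; to complete the argument along these lines you would need to construct explicitly a compatible nonproduct coupling in the degenerate case (or establish nonuniqueness there by a different mechanism), which the proposal does not do.
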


\begin{proof}
Suppose $\gcd(A)=k_0 >1$. Define $\theta^*_k=\theta_{k k_0}$ for $k
\in \mathbb{N}_+$. Let $\mathbf{X}^*$ be any
$p^{\theta^*}$-compatible process. It is easily seen that any
process $\mathbf{X}=\{X_{n}, n \in \mathbb{Z}\}$ such that
$\mathbf{X}^{(h)}=\{X_{nk_0+h}, n \in \mathbb{Z}\}$ has the same law
of $\mathbf{X}^*$, for any $h=0,1,\ldots,k_0-1$, is a
$p^{\mathbf{\theta}}$-compatible process. Since the joint
distribution of $\mathbf{X}^{(h)}$ for $h=0,1,\ldots,k_0-1$ is
arbitrary it is clear that uniqueness does not hold for
$p^{\mathbf{\theta}}$. In particular one can take all
$\mathbf{X}^{(h)}$'s  either equal or independent, in order to
produce two different elements of
$\mathcal{G}(p^{\mathbf{\theta}})$.
\end{proof}

Another source of nonuniqueness comes with the following definition.

\begin{definition}\label{LDR}
A configuration $\mathbf{s} \in G^{\mathbb{Z}}$
is \emph{coherent} with the sequence of coefficients $\mathbf{\theta} \in G^{\mathbb{N}_+}$ if for any
$(n,m) \in \mathcal{A}_{\mathbf {\theta}}$ it is $s_n s_m \theta_{n-m}>0$.
\end{definition}
Coherent configurations come always in pairs, since if $\mathbf{s} \in G^{\mathbb{Z}}$ is coherent, the
same is true for $-\mathbf{s}$.

\begin{proposition}\label{coherentstates} A configuration $\mathbf{s} \in G^{\mathbb{Z}}$
coherent with $\mathbf{\theta} $
is a degenerate compatible process (i.e. $\delta_{\mathbf{s}} \in \mathcal{G}(p^{\mathbf{\theta}})$).
Thus, when a coherent configuration exists, then $|\mathcal{G}(p^{\mathbf{\theta}})|>1$.
\end{proposition}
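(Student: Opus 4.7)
The plan is to verify the compatibility condition (\ref{compat}) directly for the degenerate measure $\delta_{\mathbf{s}}$ concentrated on a coherent configuration $\mathbf{s}$. Under this measure everything is deterministic, so the conditional probability $P(X_n=g\mid \mathbf{X}_{-\infty}^{n-1})$ equals $1$ when $g=s_n$ and $0$ otherwise, for every $n\in\mathbb{Z}$. Hence it suffices to show that
\[
p^{\mathbf{\theta}}(s_n\mid \mathbf{s}_{-\infty}^{n-1})=1
\qquad\text{for every }n\in\mathbb{Z}.
\]

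First I would rewrite Definition \ref{LDR} in a more usable form: since $(n,n-k)\in\mathcal{A}_{\mathbf{\theta}}$ exactly when $k\in A_{\mathbf{\theta}}$, coherence says $s_n s_{n-k}\,\theta_k>0$ for all $k\in A_{\mathbf{\theta}}$, and for such $k$ this forces $s_n s_{n-k}\theta_k=|\theta_k|$. For $k\notin A_{\mathbf{\theta}}$ we have $\theta_k=0$ and the identity $s_n s_{n-k}\theta_k=|\theta_k|$ holds trivially. Plugging this into (\ref{specif}) yields
\[
p^{\mathbf{\theta}}(s_n\mid \mathbf{s}_{-\infty}^{n-1})
=\frac{1}{2}+\frac{s_n}{2}\sum_{k=1}^{\infty}\theta_k s_{n-k}
=\frac{1}{2}+\frac{1}{2}\sum_{k=1}^{\infty}|\theta_k|
=\frac{1}{2}+\frac{1}{2}=1,
\]
using the normalization $\sum_{k=1}^{\infty}|\theta_k|=1$ that was imposed right after (\ref{specif}). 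This establishes $\delta_{\mathbf{s}}\in \mathcal{G}(p^{\mathbf{\theta}})$.

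For the second statement, I would invoke the observation already recorded after Definition \ref{LDR}: if $\mathbf{s}$ is coherent then so is $-\mathbf{s}$ (each product $s_n s_{n-k}$ is unchanged by flipping both signs). By the first part, $\delta_{-\mathbf{s}}$ is also in $\mathcal{G}(p^{\mathbf{\theta}})$, and since $\mathbf{s}\neq -\mathbf{s}$, the two degenerate measures are distinct elements of $\mathcal{G}(p^{\mathbf{\theta}})$, so $|\mathcal{G}(p^{\mathbf{\theta}})|\geq 2$.

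There is no real obstacle here; the argument is a one-line computation once the coherence condition is read off correctly. The only point to be careful about is making sure the series in (\ref{specif}) is handled as an absolutely convergent sum indexed by $A_{\mathbf{\theta}}$, so that re-signing term by term is legitimate; this is immediate from $\sum_k|\theta_k|=1<\infty$.
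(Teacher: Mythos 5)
Your proof is correct and follows essentially the same route as the paper: the same term-by-term observation that coherence forces $s_n s_{n-k}\theta_k=|\theta_k|$, the same substitution into (\ref{specif}) using $\sum_k|\theta_k|=1$, and the same appeal to the pair $\pm\mathbf{s}$ (noted in the paper just before the proposition) to get $|\mathcal{G}(p^{\mathbf{\theta}})|>1$. No gaps.
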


\begin{proof}
First notice that if $s_n s_m \theta_{n-m}>0$, then 
$s_n s_m \theta_{n-m} =|\theta_{n-m}|$. Now, if $X_n=s_n$, $n \in \mathbb {Z}$,
then, for any $n \in \mathbb{Z}$, it is obviously $P(X_n = s_n |
\mathbf{X}^{n-1}_{-\infty} = \mathbf{s}^{n-1}_{-\infty})=1$. But on
the other hand
\begin{equation}\label{coerente}
     1=\frac{1}{2} +
    \frac{1}{2} \sum_{k=1}^\infty | \theta_k |=
    \frac{1}{2} +
    \frac{s_n}{2} \sum_{k=1}^\infty \theta_k s_{n-k}
\end{equation}
which agrees with (\ref{specif}).
\end{proof}

The following result characterizes the existence of coherent configurations in a convenient way.

\begin{lemma}\label{vecchia} If a coherent configuration exists all the paths joining any two
given sites have the same sign. Conversely, if all the \emph{oriented} paths joining any two
given sites have the same sign, a coherent configuration necessarily exists.
\end{lemma}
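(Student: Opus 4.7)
For the forward direction, let $\mathbf{s}$ be a coherent configuration. By Definition~\ref{LDR} we have $\mathrm{sign}(\theta_{|n_i-n_{i+1}|}) = s_{n_i} s_{n_{i+1}}$ for every edge $\{n_i, n_{i+1}\} \in \widetilde{\mathcal{A}}$, so the sign of any path $n_0, n_1, \ldots, n_l$ equals $\prod_{i=0}^{l-1} s_{n_i} s_{n_{i+1}}$, which telescopes to $s_{n_0} s_{n_l}$ and therefore depends only on the endpoints; the same holds a fortiori for oriented paths, which are a special case.

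For the converse, the plan is to construct a group homomorphism $\chi : \mathbb{Z} \to \{-1,+1\}$ satisfying $\chi(d) = \mathrm{sign}(\theta_d)$ for every $d \in A$, and then set $s_n := \chi(n)$. Coherence is then automatic: since $\chi$ takes values in $\{\pm 1\}$ one has $\chi(m) = \chi(-m)$, so for any $(n,m) \in \mathcal{A}$, $s_n s_m = \chi(n)\chi(m) = \chi(n-m) = \mathrm{sign}(\theta_{n-m})$, whence $s_n s_m \theta_{n-m} = |\theta_{n-m}| > 0$.

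To build $\chi$, first invoke the shift-invariance of $\mathcal{A}$: the common sign guaranteed by the hypothesis depends only on the difference $d = n - m$; call it $\epsilon(d)$. Its natural domain is the numerical semigroup $S^+ \subseteq \mathbb{N}_+$ generated by $A$, and concatenation of oriented paths gives $\epsilon(d_1 + d_2) = \epsilon(d_1)\epsilon(d_2)$ on $S^+$, so in particular $\epsilon(d) = \mathrm{sign}(\theta_d)$ for $d \in A$. Assuming $\gcd(A)=1$ (otherwise $\widetilde{\Gamma}$ decomposes into connected components indexed by residues modulo $\gcd(A)$ and one argues on each component), the Frobenius (``Chicken McNugget'') property yields $S^+ \supseteq \{F, F+1, \ldots\}$ for some $F$; one then defines $\chi(n) := \epsilon(M+n)\,\epsilon(M)$ for any $M \in S^+$ large enough that $M+n \in S^+$, and multiplicativity of $\epsilon$ on $S^+$ shows that this is independent of $M$ and a genuine group homomorphism agreeing with $\epsilon$ on $S^+$. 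The delicate point is precisely this extension: $\epsilon$ initially lives on a semigroup with possible holes near $0$, and one must promote it to a character on $\mathbb{Z}$ by combining shift-invariance (which reduces $\epsilon$ to a function of a single integer) with Frobenius' result (which lets any $n \in \mathbb{Z}$ be translated into $S^+$ by a large enough $M$).
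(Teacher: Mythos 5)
Your proof is correct, and the converse is argued by a genuinely different route than the paper's. The paper extends $\mathbf{s}$ along \emph{unoriented} paths from a representative of each connected component, and the crux of its argument is the reduction of unoriented paths to oriented ones: a hypothetical pair of unoriented paths of opposite signs with common endpoints yields a negative cycle, whose forward-traveled and backward-traveled arcs are then shifted and concatenated into two \emph{oriented} paths with the same endpoints and opposite signs, contradicting the hypothesis. You avoid unoriented paths altogether: you read the hypothesis as defining a multiplicative sign function $\epsilon$ on the numerical semigroup $S^+$ generated by $A_{\mathbf{\theta}}$ (using shift-invariance of $\mathcal{A}$ and concatenation of oriented paths), and then promote $\epsilon$ to a character $\chi$ of $\mathbb{Z}$ via the Frobenius property when $\gcd(A_{\mathbf{\theta}})=1$, setting $s_n=\chi(n)$; the verification of independence of $M$ and of multiplicativity of $\chi$ is routine and your sketch of it is sound, as is the reduction to components when $\gcd(A_{\mathbf{\theta}})>1$. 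What your approach buys is that the coherent configurations are exhibited explicitly as the characters $\chi\equiv 1$ and $\chi(n)=(-1)^n$ (times a free sign per component), so Proposition \ref{coefficienti} (constants or checkerboards) comes out for free; what the paper's approach buys is a purely graph-theoretic, self-contained argument that also establishes the stronger intermediate statement that \emph{all} paths, oriented or not, between two sites share a sign. The forward direction (telescoping $\prod_i s_{n_i}s_{n_{i+1}}=s_{n_0}s_{n_l}$) is essentially identical to the paper's.
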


\begin{proof}
 First notice that, if a coherent configuration exists, for any path
 $\{ \{n_i , n_{i+1}\} \in \widetilde {\mathcal{A}} \}_{i = 0,\ldots , l-1 }$
 we have $s_{n_0}s_{n_{l}} \prod_{i=0}^{l-1}\theta_{|n_i-n_{i+1}|}>0$, from which necessarily all other
 paths joining $n_0$ and $n_{l}$ must have the same sign. Conversely, choose a representative for
 each connected component of $\widetilde{\Gamma}$, and define arbitrarily the configuration $\mathbf{s}$ over
 these sites. Suppose $n_0$ is one of these representatives and consider any path
 $\{ \{n_i , n_{i+1}\} \in \widetilde {\mathcal{A}} \}_{i = 0,\ldots , l-1 }$. By defining
 \begin{equation}\label{coerente}
 s_{n_{l}}=s_{n_0}\prod_{i=0}^{l1}\hbox{sign}(\theta_{|n_i-n_{i+1}|}),
 \end{equation}
 one can extend unambiguously $\mathbf{s}$ to a coherent configuration. It
 remains to prove that if all the oriented paths between any two sites have
 the same sign, the same is true for all the paths. In fact, suppose that there are two
 paths with different signs joining two distinct sites. By following one of the two paths
 in the opposite direction and concatenating it with the first we get a cycle.
 By assumption the sign of this cycle is negative.
 We can now produce two oriented paths joining the same endpoints by shifting and concatenating the arcs which are
 traveled in the same direction. These two oriented paths necessarily have opposite signs.
 \end{proof}

 By the previous result, when $\gcd(A)=1$ and all the paths between any two sites have the
 same sign, a coherent configuration $\mathbf{s}$ is uniquely determined by the choice of the sign
 $s_0$ at the origin. Next observe that by shifting a coherent configuration, another coherent
 configuration is obtained. Now if a configuration is invariant under the shift, it is necessarily constant.
 If it changes sign under the shift, it is necessarily one of the
 two \textquotedblleft checkerboard" configurations alternating
 periodically $-1$ with $+1$. Thus we have the following result.

\begin{proposition}\label{coefficienti}

When $\gcd(A_{\mathbf{\theta}})=1$ a configuration coherent with $p^{\mathbf{\theta}}$ exists
only in the following two cases:

\begin{itemize}

\item[a)]  either $\theta_k \geq 0$, for all positive integers $k$;

\item[b)] or $\theta_k \leq 0$ for $k$ odd and $\theta_k \geq 0$ for $k$
even.
\end{itemize}
\end{proposition}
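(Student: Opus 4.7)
The plan is to rely on the dichotomy already established in the paragraph immediately preceding the proposition. Since $\gcd(A_{\mathbf{\theta}})=1$ makes $\widetilde{\Gamma}$ connected, a coherent configuration (if any) is determined up to a global sign flip by its value at the origin. Applying the unit shift to a coherent configuration again yields a coherent configuration, so by this uniqueness the shifted configuration must either agree with the original or equal its pointwise negation. In the first case $\mathbf{s}$ is constant (hence $\mathbf{s} \equiv +1$ or $\mathbf{s} \equiv -1$); in the second case $s_{n+1} = -s_n$ for every $n$, forcing $\mathbf{s}$ to be one of the two checkerboard configurations $s_n = \pm(-1)^n$.

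Next I would translate, in each of these two cases, the coherence condition $s_n s_m \theta_{n-m} > 0$ for every $(n,m) \in \mathcal{A}_{\mathbf{\theta}}$ into a sign condition on $\mathbf{\theta}$. For the constant configuration the product $s_n s_m$ is $+1$, so coherence is equivalent to $\theta_k > 0$ for every $k$ in the support $A_{\mathbf{\theta}}$, which is exactly assertion (a). For the checkerboard configuration $s_n s_m = (-1)^{n-m}$, so coherence at lag $k = n-m$ amounts to $(-1)^k \theta_k > 0$ on $A_{\mathbf{\theta}}$, i.e.\ $\theta_k \geq 0$ for even $k$ and $\theta_k \leq 0$ for odd $k$, which is assertion (b).

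There is no real obstacle here: the non-trivial geometric content (connectedness of $\widetilde{\Gamma}$, uniqueness of a coherent configuration given its value at a base point, and equivalence of coherence with equality of signs of paths) has already been developed in Lemma \ref{vecchia} and in the subsequent remarks. The proof is therefore a short verification, splitting on the two possibilities for the shift action on $\mathbf{s}$ and reading off the sign pattern of $\mathbf{\theta}$ in each case.
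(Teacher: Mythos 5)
Your proposal is correct and follows essentially the same route as the paper: the authors also derive the dichotomy (constant versus checkerboard) from the uniqueness of a coherent configuration up to global sign on the connected graph $\widetilde{\Gamma}$ together with invariance of coherence under the shift, and then read off the corresponding sign patterns of $\mathbf{\theta}$. The only difference is presentational, since the paper places the shift argument in the remarks preceding the proposition and keeps the proof itself to the final verification.
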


\begin{proof}
The two cases correspond to the two possible pairs of coherent
configurations, the constants and the checkerboards. It is easily
checked that the constant configurations are coherent in case a), and
the checkerboards in case b).
\end{proof}

From Propositions \ref{lemma:k0}, \ref{coherentstates} and
\ref{coefficienti}, Theorem \ref{alterna} immediately holds.

\medskip

Before turning to discuss sufficient conditions for uniqueness we
observe that any pair of configurations $\pm \mathbf{s} \in
G^{\mathbb{Z}}$ is equivalently described by an unordered partition
of $\mathbb{Z}$ into two subsets, which are the inverse images of
the two possible values $-1$ and $+1$.

If $\gcd(A)=1$ and the checkerboard configurations are coherent,
this partition has the property that each pair
of sites with the same parity is joined by a positive path, and each
pair of sites with different parity is joined by a negative path. If
$\gcd(A)=1$ and the constant configurations are coherent, then we
have a degenerate partition with a single class: all sites are
joined by a positive path. This interpretation will be
useful when dealing with partitions of this kind for suitable
subgraphs of $\widetilde {\Gamma}$.

\section{Random subtrees and uniqueness}

We start by observing that sampling from $p^{\mathbf{\theta}}$ given in (\ref{specif}) is
realized by drawing a random variable
$K$ with $P(K=k)=|\theta_k|$, for $k=1,2,\ldots$ and returning $X^*=\hbox{sign}(\theta_K)w_{-K}$.
Indeed, by setting $y_k=\hbox{sign}(\theta_k)w_{-k}$, one has

\begin{equation}\label{gauge}
    P(X^*=1|\mathbf {w}_{-\infty}^{-1} )=\sum_{k:y_k=+1}|\theta_k|=
    \frac {1}{2}\{\sum_{k:y_k=+1}|\theta_k|+(1-\sum_{k:y_k=-1}|\theta_k|)\}=
    \frac {1}{2}+\frac{1}{2}\sum_{k=1}^{\infty}|\theta_k|y_k,
\end{equation}
and since $|\theta_k|y_k=\theta_k w_{-k}$, the statement is proved.

Thus, given a $p^{\mathbf{\theta}}$-compatible process $\mathbf {X}=\{X_n, n \in
\mathbb{Z}\}$ we have the following resampling scheme, that does not
change its distribution. Given any site $n \in \mathbb{Z}$, we
select $K_n$ from the distribution $\{\theta_k, =1,2,\ldots\}$, and
then pick the arc $(n,n-K_n)$ from the set of arcs in
$\mathcal{A}$ outcoming from $n$. The resampled value $X^*_n=\hbox {sign} (\theta_{K_n}) X_{n-K_n}$ is
then obtained by sending the value at the sink $n-K_n$ of the
chosen arc back to the source $n$, changing the sign if the
sign of the arc is negative.

By drawing $K_n$ as above, independently for each $n \in
\mathbb{Z}$, we obtain in this way a random subgraph
$\Gamma_r=(\mathbb{Z}, \mathcal{B})$ of $\Gamma=(
\mathbb{Z},\mathcal{A} ) $, and we call its symmetrization
$\widetilde{\Gamma}_r=(\mathbb{Z}, \widetilde{\mathcal{B}})$. This
will play a fundamental role for uniqueness. Since in $\mathcal{B}$
there is only one outcoming arc from each site $n \in \mathbb{Z}$,
in $\widetilde{\mathcal{B}}$ there is no cycle: thus the connected
components of $\widetilde{\Gamma}_r$ are trees, and such a graph is
a forest.

Let us define
\begin{equation}\label{filtr}
  \mathcal{F}_k^m
  = \sigma ( K_{ i }: k\leq i\leq m),
\end{equation}
where $-\infty \leq k <m\geq +\infty$. Define the left tail
$\sigma$-algebra $\mathcal {F}_{-\infty}= \cap_{m \in \mathbb{Z}}
\mathcal {F}^m_{-\infty}$. The number $N$ of trees which forms
$\widetilde{\Gamma}_r$ is a random variable is $\mathcal
{F}_{-\infty}$--measurable since it is possible to read $N$ from the
values $\{K_n, n\leq m\}$ no matter how far the site $m \in
\mathbb{Z}$ lies in the past. In fact all these trees necessarily
\textquotedblleft have to be born at $-\infty$", since each oriented
path in $\mathcal {B}$ can be extended infinitely to its left. Since
the $K_n$'s are i.i.d., it follows that $N$ is a.s. constant. In the
following we will prove that provided the assumptions of Theorem
\ref{unico} do not hold (otherwise coherent sequences exist) and
$N=1$ a.s. (which clearly implies $\gcd(A)=1$), there is uniqueness
for $\mathbf {\theta}$.

Before proving this result we exhibit what in these cases will turn
out to be the unique compatible process. Since $N=1$ any two sites
will have a unique path connecting them. Hence we can partition the
graph $\widetilde{\Gamma}_r$ in two disjoint components in the
following way. Starting from an arbitrary site, say the origin, we
define the random set of sites $S$ which are joined to the origin by
a positive path (including the origin itself). Each pair of sites
belonging to this set is joined by a unique positive path. The same
property holds for the complement $S^c$. Moreover, each site in $S$
is connected to any site in $S^c$ by a negative path. In order to
define a process from this partition it remains to select a random
sign common to sites in $S$ (the opposite one is given to those in
$S^c$). This is achieved by assigning to the origin one of the two
signs with equal probability. To sum up we define
\begin{equation}\label{camp}
X_n=Y(2 \cdot \mathbf{1}_S(n)-1), n \in \mathbb{Z}
\end{equation}
where $Y$ is independent of $S$, with distribution $P( Y =\pm 1 ) =1/2 $.

\begin{proposition}\label{compa}
The process $\mathbf{X}=\{X_n, n \in \mathbb{Z}\}$ defined in (\ref{camp}),
is compatible with $p^{\mathbf{\theta}}$.
\end{proposition}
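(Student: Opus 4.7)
My plan is to derive the pointwise identity $X_n = \hbox{sign}(\theta_{K_n})\, X_{n-K_n}$ and to establish the independence $K_n \perp \mathbf{X}_{-\infty}^{n-1}$; these two together will yield compatibility via the manipulation already performed in (\ref{gauge}). The identity is almost immediate: $\{n, n-K_n\} \in \widetilde{\mathcal{B}}$ carries sign $\hbox{sign}(\theta_{K_n})$, and being an edge of the tree $\widetilde{\Gamma}_r$ it is itself the unique path between its endpoints, so (\ref{camp}) forces $X_n X_{n-K_n} = \hbox{sign}(\theta_{K_n})$. Once the independence is in hand, conditioning gives
\begin{equation*}
P(X_n = g \mid \mathbf{X}_{-\infty}^{n-1}) = \sum_{k=1}^\infty |\theta_k|\, \mathbf{1}\{\hbox{sign}(\theta_k)\, X_{n-k} = g\} = \frac{1}{2} + \frac{g}{2} \sum_{k=1}^\infty \theta_k\, X_{n-k} = p^{\mathbf{\theta}}(g \mid \mathbf{X}_{-\infty}^{n-1}).
\end{equation*}

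To prove the independence I will first re-root the construction. The law of $\mathbf{X}$ is unchanged if the origin in (\ref{camp}) is replaced by any reference site $r \in \mathbb{Z}$: the process is obtained from an unordered two-class partition of $\mathbb{Z}$ by assigning $\pm Y$ to the two classes, so the choice of reference is immaterial. I take $r = n - 1$ and write $X_m = Y\, \sigma(r, m)$, where $\sigma(r, m)$ is the sign of the unique path from $r$ to $m$ in $\widetilde{\Gamma}_r$.

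Deleting the edge $e_n = \{n, n-K_n\}$ from $\widetilde{\Gamma}_r$ disconnects it into two subtrees $T^+ \ni n$ and $T^- \ni n - K_n$. Since each oriented chain $v, v - K_v, v - K_v - K_{v-K_v}, \ldots$ is strictly decreasing, $T^+$ coincides with $\{v \geq n : \text{the chain issued from } v \text{ visits } n\}$; in particular every $m < n$ belongs to $T^-$, and so does $r = n - 1$. Moreover, $T^-$ is measurable with respect to $\sigma(K_m : m \neq n)$: deciding whether a chain visits $n$ requires only the $K_v$'s at sites $v > n$ (the value $K_n$ would be consulted only after the chain had reached $n$, which never occurs in this decision), and the edges $\{m, m - K_m\}$ of $T^-$ involve only $K_m$'s with $m \in T^- \subseteq \mathbb{Z} \setminus \{n\}$. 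Hence $T^- \perp K_n$.

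Since both $r$ and $m$ lie in the connected subtree $T^-$ for every $m < n$, the unique tree path between them stays inside $T^-$, so $\sigma(r, m) = \sigma_{T^-}(r, m)$ is a function of $T^-$ alone. Consequently $\mathbf{X}_{-\infty}^{n-1}$ is a measurable function of $(Y, T^-)$, a pair independent of $K_n$, which yields the desired independence. The main obstacle is precisely this claim, since $K_n$ does perturb the full tree; what unlocks it is the re-rooting at $r = n - 1$ combined with the observation that the past of $n$ is visible only through $T^-$, the part of the tree that survives removing $n$'s outgoing arc.
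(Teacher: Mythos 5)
Your proof is correct, and it takes a genuinely different route from the paper's. The paper first observes that $\mathbf{X}$ is stationary, reduces compatibility to checking that resampling a single site according to (\ref{specif}) leaves the law invariant, and then verifies this invariance by replacing $K_0$ with an independent copy $K_0^*$ and arguing that the re-attached partition has the same distribution while the sign passed back to the origin remains symmetric. You instead verify Definition \ref{DLR} head-on at every site $n$: the a.s. recursion $X_n=\hbox{sign}(\theta_{K_n})X_{n-K_n}$ (which holds because the edge $\{n,n-K_n\}$ is itself the unique tree path between its endpoints) reduces everything to the independence $K_n \perp \mathbf{X}_{-\infty}^{n-1}$, and you obtain that independence by deleting the outgoing edge of $n$, identifying the component $T^-$ not containing $n$ as the set of sites whose forward chains miss $n$, noting that $(-\infty,n-1]\subseteq T^-$ and that $T^-$ together with its signed edges is a function of $\{K_m: m\neq n\}$, and re-rooting the sign assignment at $n-1$. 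What your approach buys is a fully explicit justification of the step the paper treats informally (namely that the partition seen from the past does not feel $K_n$), at the cost of the tree-surgery bookkeeping; the paper's resampling argument is shorter and also dispenses with checking each $n$ separately by invoking stationarity, but leaves the distributional identity of the re-attached subtree to the reader. One small presentational point: after re-rooting at $r=n-1$ the symmetric sign you call $Y$ is really $Y_r=X_{n-1}=Y(2\cdot\mathbf{1}_S(n-1)-1)$, and you should note explicitly that, $Y$ being symmetric and independent of the $K_m$'s, the variable $Y_r$ is again symmetric and independent of the whole family $\{K_m\}$ (condition on the $K_m$'s to see this); this is exactly what makes the pair $(Y_r,T^-)$, and hence $\mathbf{X}_{-\infty}^{n-1}$, independent of $K_n$.
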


\begin{proof}
It is clear that, since the origin is connected to any site almost
surely, also the sign of such a site will have the same symmetric
distribution. Since the unordered partition $\{S,S^c\}$ is invariant
by translation of the $K_n$'s, the process $\{X_n, n \in
\mathbb{Z}\}$ is certainly stationary. Thus, to verify
compatibility, it is sufficient to prove that resampling the value
in $1$ according to (\ref{specif}) will not change its distribution.
For this we substitute $K_0$ with $K_0^*$ having the same
distribution, independently of all the $K_n$'s. This means that the
subtree attached to the origin, which originally is attached to the
site $-K_0$, becomes attached to the site $-K_0^*$. The resulting
partition is the one which would be obtained directly from $\{K_n, n
\neq 0, K_0^*\}$, hence it has the same distribution as before.
Moreover the sign of the origin is either that of $X_{-K^*_0}$ or
the opposite, depending on the sign of $\theta_{K^*_0}$. But the
sign of $X_{-K^*_0}$ has again a symmetric distribution,
independently of all the $\{K_n, n \neq 0, K_0^*\}$, because of the
way $X_0=Y$ was defined. This ends the proof.
\end{proof}

For proving our uniqueness result it is now necessary to introduce
the processes $\mathbf{X}^{-n,\mathbf{w}}$, parameterized by $n \in
\mathbb{Z}$ and $\mathbf{w} \in G^{-\mathbb {N}_+}$, with the
\textquotedblleft boundary conditions"
$X_{-n-i}^{-n,\mathbf{w}}=w_{-i}$ for $i=1,2,\ldots$ and for all the
remaining sites $m\geq -n$ obtained by following the unique oriented
path $\{(N_i,N_{i+1}), i=0,\ldots, L_m-1\}$ starting from $N_0=m$ in
$\Gamma_r$ (which means that $N_{i+1}=N_i-K_{N_i}$), stopped when it
\textquotedblleft reaches a boundary site" (which means that
$L_m=\inf\{i:N_i<-n\}$). Finally
\begin{equation}\label{strage}
X_{m}^{-n,\mathbf{w}}=w_{N_{L_m}+n}\prod_{i=0}^{L_m-1} \hbox{sign}(\theta_{N_i-N_{i+1}}),
\end{equation}
so that $\mathbf{X}^{-n,\mathbf{w}}$ is obtained from the boundary conditions by following
iteratively the sampling scheme introduced at the beginning of the section. All these processes are
thus constructed on the probability space where the process $\{K_n, n \in \mathbb{Z}\}$ lives.

Here is the promised result.

\begin{theorem}\label{unicodet}
Consider a transition kernel $p^{\mathbf{\theta}}$ of the form (\ref{specif}) with the following
assumptions on the coefficients:
\begin{itemize}

\item[a)]
 there exists either an even integer $m \in \mathbb{N}_+$ with $\theta_m<0$, or
 an odd integer $m_1$ with $\theta_m<0$ and another odd integer $m_2$ with $\theta_n>0$;

\item[b)] the random graph $\widetilde {\Gamma}_r$ is a.s.
connected.

\end{itemize}
\end{theorem}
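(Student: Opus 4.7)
The plan is a coupling-from-the-past argument on the enlarged probability space carrying the i.i.d.\ sequence $\{K_n\}_{n\in\mathbb{Z}}$ with $P(K_n=k)=|\theta_k|$. Given any compatible process $\mathbf{X}$, taken independent of $\{K_n\}$, the resampling identity~(\ref{gauge}) implies that iterating the deterministic rule $X_m=\hbox{sign}(\theta_{K_m})X_{m-K_m}$ from left to right, starting from any tail $\mathbf{X}_{-\infty}^{-n-1}$, produces a valid realization of the conditional law of $(X_{-n},X_{-n+1},\ldots)$ given that tail. Hence one may couple $\mathbf{X}$ and $\{K_n\}$ so that, simultaneously for every $n\ge 1$ and every $m\ge -n$,
\[
X_m = X_m^{-n,\mathbf{X}_{-\infty}^{-n-1}}\quad\text{a.s.}
\]

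Assumption b) now enters geometrically. Every vertex of $\Gamma_r$ has out-degree one, so each oriented path is strictly decreasing; together with the a.s.\ connectedness of $\widetilde{\Gamma}_r$ this forces the oriented paths emanating from any two sites $m_1,m_2$ to coalesce at some a.s.\ finite site $M(m_1,m_2)$. For every $n$ with $-n<M(m_1,m_2)$, the boundary exit vertex appearing in~(\ref{strage}) is the same for $m_1$ and $m_2$, so the $\mathbf{w}$-dependence cancels in the product, giving
\[
X_{m_1}^{-n,\mathbf{w}}\,X_{m_2}^{-n,\mathbf{w}} = \sigma_{m_1,m_2},
\]
where $\sigma_{m_1,m_2}\in\{-1,+1\}$ is the product of $\hbox{sign}(\theta_{\cdot})$ along the unique undirected path joining $m_1$ to $m_2$ in $\widetilde{\Gamma}_r$, a quantity measurable with respect to $\sigma(\{K_n\})$. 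Transporting this through the coupling yields $X_{m_1}X_{m_2}=\sigma_{m_1,m_2}$ a.s.\ for any compatible $\mathbf{X}$. Consequently, setting $Y:=X_0$ and $s_m:=\sigma_{0,m}$, every compatible process has the form $X_m=Y\,s_m$, and its law is determined by the joint law of $\bigl(Y,\{K_n\}\bigr)$.

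The remaining step is to show that $Y$ is uniform on $\{-1,+1\}$ and independent of $\{K_n\}$; this is where assumption a) is indispensable and which I expect to be the main obstacle. Under a), Proposition~\ref{coefficienti} rules out coherent configurations, so the random family $\{s_m\}$ is neither a.s.\ constant nor a.s.\ alternating. The idea is to write the compatibility condition~(\ref{compat}) at site $0$ conditionally on the past $\mathbf{X}_{-\infty}^{-1}=\{Y s_{-k}\}_{k\ge 1}$, leading to the identity
\[
E\bigl[Y s_0\,\big|\,\sigma(Y s_{-k}:k\ge 1)\bigr]=Y\sum_{k\ge 1}\theta_k s_{-k},
\]
and to exploit the non-degeneracy (under a)) of the conditional law of $s_0$ given $(s_{-k})_{k\ge 1}$ together with the $\pm$-symmetry $\mathbf{X}\mapsto-\mathbf{X}$ of the kernel in order to force $P(Y=+1\mid\{K_n\})=\tfrac12$ a.s. Once this is achieved, the law of $\mathbf{X}$ coincides with the one constructed in~(\ref{camp}), which is compatible by Proposition~\ref{compa}, and uniqueness follows.
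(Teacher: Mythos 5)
Your reduction is sound and coincides with the first half of the paper's argument: after coupling an arbitrary compatible process with the i.i.d.\ sequence $\{K_n\}$ and resampling, assumption b) forces the backward paths from any two sites to coalesce a.s., so the relative signs $X_{m_1}X_{m_2}=\sigma_{m_1,m_2}$ are functions of the $K_n$'s alone and the whole law reduces to the conditional law of $Y=X_0$ given $\{K_n\}$. The paper packages the same observation through the root $T_{[l,L]}$ of the smallest subtree containing a finite window and the factorization $\mathbf{X}_l^L=X_{T_{[l,L]}}\mathbf{Z}_l^L$, so up to this point you are on track.

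The gap is the step you explicitly defer: showing that $Y$ is conditionally uniform given $\{K_n\}$. This is not a loose end but the entire analytic content of the theorem, and it is precisely what the paper's Lemma \ref{probinzero} supplies. The route you sketch does not work as stated: since $s_m=\sigma_{0,m}$ gives $s_0\equiv+1$, your displayed identity collapses to the tautological compatibility relation $E[X_0\mid\mathbf{X}_{-\infty}^{-1}]=\sum_k\theta_kX_{-k}$, and the conditioning $\sigma$-algebra $\sigma(Ys_{-k}:k\ge1)$ is not comparable with $\sigma(\{K_n\})$, so nothing about $P(Y=+1\mid\{K_n\})$ can be extracted from it; the $\pm$-symmetry of the kernel only tells you that $-\mathbf{X}$ is again compatible, which is useless when uniqueness is the very thing being proved. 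The paper's mechanism is quantitative and different: assumption a) together with Lemma \ref{vecchia} produces two \emph{oriented} paths $\gamma^{+},\gamma^{-}$ with the same endpoints and opposite signs; along the infinite backward path from $0$ in $\Gamma_r$ one extracts renewal epochs $T_i$ separated by at least $m_0$, at each of which $\gamma^{+}$ or $\gamma^{-}$ occurs with fixed positive probabilities $p_{+},p_{-}$; each occurrence of $\gamma^{-}$ in place of $\gamma^{+}$ flips $X_0^{-n,\mathbf{w}}$, so conditionally on the number of occurrences the sign is governed by the parity of a binomial random variable, whose parity equidistributes as the number of trials grows. This yields $\sup_{\mathbf{w}}\bigl|P(X_0^{-n,\mathbf{w}}=1)-\tfrac12\bigr|\to0$ uniformly in the boundary condition, which is what actually closes the argument. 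Without an estimate of this type your proof is incomplete.
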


In order to prove the above result, we prove first a very useful lemma.

\begin{lemma} \label{probinzero}
Under conditions a) and b)
\begin{equation}\label{equiprob}
    \lim_{n \to \infty} \sup_{\mathbf{w} \in
    G^{\mathbb{N}_+}} \left |P(X_0^{-n,\mathbf {w}}=1)-\frac{1}{2} \right |=0
\end{equation}
\end{lemma}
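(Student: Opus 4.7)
The plan is to recast $g_n(\mathbf{w}) := E[X_0^{-n,\mathbf{w}}]$ as a convolution against a resolvent sequence and then invoke Wiener's $1/f$ lemma together with a routine dominated-convergence estimate. Write $r_\ell := \sum_{k \geq \ell}|\theta_k|$, so that $r_\ell \leq 1$ and $r_\ell \to 0$; since $P(X_0^{-n,\mathbf{w}}=1) - \tfrac{1}{2} = \tfrac{1}{2} g_n(\mathbf{w})$, it suffices to show $\sup_{\mathbf{w}} |g_n(\mathbf{w})| \to 0$.

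Conditioning on the first step $K_0$ of the walk from the origin and using the shift-invariance $X_{-k}^{-n,\mathbf{w}} \stackrel{d}{=} X_0^{-(n-k),\mathbf{w}}$ for $k \leq n$ together with the boundary identification $X_{-k}^{-n,\mathbf{w}} = w_{n-k}$ for $k > n$ yields the linear recursion
\begin{equation*}
g_n(\mathbf{w}) = \sum_{k=1}^n \theta_k \, g_{n-k}(\mathbf{w}) + b_n(\mathbf{w}), \qquad b_n(\mathbf{w}) := \sum_{k > n} \theta_k\, w_{n-k},
\end{equation*}
with $g_0 = b_0$ and $|b_n(\mathbf{w})| \leq r_{n+1}$ uniformly in $\mathbf{w}$. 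Introducing $A(z) := \sum_{k \geq 1} \theta_k z^k$ and its formal resolvent $U(z) := (1-A(z))^{-1} = \sum_{m \geq 0} U(m) z^m$, a standard generating-function computation converts the recursion into $g_n(\mathbf{w}) = \sum_{m=0}^n U(m) \, b_{n-m}(\mathbf{w})$, whence
\begin{equation*}
\sup_{\mathbf{w}} |g_n(\mathbf{w})| \leq \sum_{m=0}^n |U(m)| \, r_{n+1-m}.
\end{equation*}

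The crucial step is to show $U \in \ell^1$. Since $\sum|\theta_k| = 1$, $A$ belongs to the one-sided Wiener algebra, and Wiener's $1/f$ lemma reduces the task to checking that $1 - A(z) \neq 0$ on the closed unit disk. For $|z| < 1$ the strict bound $|A(z)| < 1$ is immediate. On $|z| = 1$, the equality $|A(e^{i\alpha})| = 1$ would force the vectors $\theta_k e^{ik\alpha}$, $k \in A_{\mathbf{\theta}}$, to share a common argument; taking ratios gives $e^{i(k_1-k_2)\alpha} \in \{\pm 1\}$ for all $k_1, k_2 \in A_{\mathbf{\theta}}$. Since $\gcd(A_{\mathbf{\theta}}) = 1$ by b) (recall that the connected components of $\widetilde{\Gamma}$ coincide with the residue classes modulo $\gcd(A_{\mathbf{\theta}})$), B\'ezout's identity forces $\alpha \in \{0, \pi\}$. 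Condition a) now rules out both remaining points: $A(1)=1$ requires all $\theta_k \geq 0$, while $A(-1) = 1$ requires the alternating sign pattern of Theorem~\ref{alterna}(b). Hence $U \in \ell^1$, and dominated convergence (dominant $|U(m)|$, with $r_{n+1-m} \to 0$ pointwise in $m$) yields $\sum_{m=0}^n |U(m)| \, r_{n+1-m} \to 0$.

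The main obstacle is exactly this analytic verification that $1 - A$ does not vanish on the closed unit disk: this is where both hypotheses enter in a complementary way, with b) localizing the possible zeros to $z = \pm 1$ via $\gcd(A_{\mathbf{\theta}}) = 1$ and a) then excluding those two specific values. Once Wiener's lemma is applicable, the remainder is a transparent consequence of the linear recursion.
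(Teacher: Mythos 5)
Your proof is correct, but it takes a genuinely different route from the paper's. The paper argues probabilistically: condition a) (via Lemma~\ref{vecchia} and the coherent-configuration analysis) produces two oriented paths $\gamma^+,\gamma^-$ with the same endpoints and opposite signs; regeneration times $T_i$ are laid down along the walk from the origin, and the accumulated sign is shown to be asymptotically symmetric because the number of occurrences of $\gamma^-$, conditioned on the total number of occurrences of $\gamma^{\pm}$, is binomial, and the parity of a $B(k,p)$ variable equidistributes as $k\to\infty$. You instead derive the exact renewal equation $g_n=\sum_{k\le n}\theta_k g_{n-k}+b_n$ for $g_n(\mathbf w)=E[X_0^{-n,\mathbf w}]$ (which is valid, since the walk from the origin is strictly decreasing and $X_{-k}^{-n,\mathbf w}$ is independent of $K_0$), solve it by the resolvent $U=(1-A)^{-1}$, and reduce everything to $U\in\ell^1$, i.e.\ to Wiener's lemma in the analytic Wiener algebra plus the non-vanishing of $1-A$ on the closed disk. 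Both arguments use exactly the same input, namely condition a) together with $\gcd(A_{\mathbf\theta})=1$ (the only consequence of b) that either proof needs), and your observation that $A(1)=1$ and $A(-1)=1$ correspond precisely to cases a) and b) of Theorem~\ref{alterna} is a clean Fourier-side restatement of the coherent-configuration dichotomy. Your approach buys a uniform and potentially quantitative bound $\sup_{\mathbf w}|g_n(\mathbf w)|\le\sum_{m\le n}|U(m)|\,r_{n+1-m}$, at the cost of invoking Wiener's $1/f$ theorem as a black box, whereas the paper's argument is elementary and self-contained. One small repair is needed on $|z|=1$: arguing via the ratios $e^{i(k_1-k_2)\alpha}$ is insufficient, because the differences of elements of $A_{\mathbf\theta}$ may generate a proper subgroup of $\mathbb Z$ even when $\gcd(A_{\mathbf\theta})=1$ (e.g.\ $A_{\mathbf\theta}=\{3,5,7\}$ gives differences with $\gcd$ equal to $2$, leaving $\alpha=\pi/2$ unexcluded). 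Instead note that $A(e^{i\alpha})=1$ together with $|A(e^{i\alpha})|\le 1$ forces each term $\theta_k e^{ik\alpha}$ to equal $|\theta_k|$, hence $e^{ik\alpha}=\mathrm{sign}(\theta_k)\in\{\pm1\}$ and $e^{2ik\alpha}=1$ for every $k\in A_{\mathbf\theta}$; B\'ezout applied to $A_{\mathbf\theta}$ itself then gives $\alpha\in\{0,\pi\}$, and condition a) excludes both, exactly as you conclude.
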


\begin{proof}

First observe that assumption a) ensures that there are no coherent
configurations. By Lemma~\ref{vecchia} we know that this requires
the existence of two oriented paths in $\widetilde {\Gamma}$ with
the same endpoints, say $0$ and $-m_0<0$, but with different sign.
By excluding a common initial subpath to these paths, one can reduce
$m_0$ to the minimum possible value. Next denote the positive path
by $\gamma^+$ and the negative one by $\gamma^-$. We write $p_+$
(resp. $p_-$) for the probability that $ \gamma^+$ (resp. $ \gamma^-
$) occurs, an event which is $\mathcal {F}^0_{-\infty}$--measurable.

Let us introduce the random times $\{T_i, i \in \mathbb{N}_+\}$, defined by recursion from $T_0=0$.
These random times are a subset of the endpoints of the arcs of the infinite oriented path $\{(N_i,N_{i+1}), i
\in \mathbb{N}\}$
in $\Gamma_r$, starting from $N_0=0$. Whereas
$N_i-N_{i+1}=K_{N_i}$, for $i \in \mathbb{N}$, the $T_i$'s need to have a distance not smaller than
$m_0$, in order to give to $\gamma^+$ and $\gamma^-$ a positive probability to happen. This explains the
following definition.
\begin{equation}\label{renew}
    T_{i+1}=\sup \{m\leq T_i-m_0: \hbox{there is a path in $\Gamma_r$ joining $T_i$ with
    $m$}\}.
\end{equation}
Furthermore, for $i \in \mathbb {N}_+$, define the marks
\begin{equation}\label{berno}
Y_i= \left \{  \begin{array}{ll}
       +1 & \hbox{if $\gamma^+$ joins $T_{i-1}$ with $T_{i}$,}  \\
                 -1 & \hbox{if $\gamma^-$ joins $T_{i-1}$ with $T_{i}$,} \\
                 0 & \hbox{otherwise.} \\
               \end{array}  \right .
\end{equation}
Finally define
$J_n=\sup\{m:T_m \geq -n\}$, $R_n^+=|\{i\leq J_n: Y_i=+1\}|$ and
$R_n^-=|\{i\leq J_n: Y_i=-1\}|$. It is clear that $\lim_{n \to \infty}(R_n^- + R_n^+ )=\infty   $
almost surely. Therefore there exists $M_n \uparrow \infty $ and $
\varepsilon_n \downarrow 0 $ such that $ P( R_n^- + R_n^+ \geq M_n )
\geq 1- \varepsilon_n $. Then
$$
P(X_0^{-n,\mathbf{w}}=+1) \leq  \varepsilon_n + \sum_{k =M_n}^{\infty}
P(X_0^{-n,\mathbf{w}}=+1 |  R_n^- + R_n^+ =k) P( R_n^- + R_n^+ =k) $$
$$
\leq \varepsilon_n + \max_{k\geq M_n } P(X_0^{-n,\mathbf {w}}=+1 |  R_n^- +
R_n^+ =k).
$$
Since, each time that a segment which is a shift of $\gamma^+$ is replaced by
the same shift of $\gamma^-$ the value of $X_0^{-n,\mathbf{w}} $ changes sign
we can bound the previous formula by
\begin{equation}\label{quasi}
    \leq \varepsilon_n + \max_{k\geq M_n }\left \{\max_{  } \{P(R_n^-
\hbox{ is odd}| R_n^- + R_n^+ =k) , P(R_n^- \hbox{ is even}| R_n^- +
R_n^+ =k) \}\right \}.
\end{equation}
In order to bound the previous expression we notice that $R_n^-$
conditioned to the event $\{ R_n^- + R_n^+ =k \}$ has a binomial
distribution, namely $B(k, \frac{p_-}{p_+  +p_-} )$.
For any random
variable $X_k \sim B(k, p )$, with $ p \in (0,1)$, it is easily
proved that $P( X_k \hbox{ is even})$ goes to $1/2$ as $k \to
\infty$. As a consequence we obtain that the expression
(\ref{quasi}) is bounded from above by
$$
 \frac{1}{2} +\varepsilon_n  + \delta_{M_n}
$$
where $\lim_{k \to \infty}\delta_k=0$. Since the same bound can be produced for $P(X_0^{-n,\mathbf{w}}=-1)$,
this ends the proof of the lemma.
\end{proof}

Notice that, since for random variables $X$ and $Y$ taking values in $\{-1,+1\}$ with laws $\mathcal {L}(X)$
and $\mathcal {L}(Y)$ it holds
$$
||\mathcal {L}(X)-\mathcal {L}(Y)||_{TV}=\sup_{||f||_{\infty}\leq 1}|E(f(X))-E(f(Y))|=|P(X=1)-P(Y=1)|,
$$
we can reformulate the above lemma as
$$
\lim_{n \to \infty}\sup_{\mathbf {w}_i \in G^{-\mathbb{N}_+}, i=1,2}||\mathcal {L}(X_0^{-n,
\mathbf {w}_1})-\mathcal {L}(X_0^{-n, \mathbf {w}_2})||_{TV}=0.
$$

\begin{proof}
[Proof of Theorem \ref{unicodet}] In order to prove uniqueness let us consider two
$p^{\mathbf{\theta}}$-compatible processes $\mathbf {X}$ and
$\mathbf {Y}$ defined on the same probability space. By using a sequence $\{K_i, i \in \mathbb{Z}\}$ of i.i.d.
random variables with distribution $\mathbf {\theta}$, independent of $\mathbf
{X}$ and $\mathbf {Y}$, it is possible to sample recursively all the values at sites $m\geq -n$,
according to the sampling scheme described at the beginning of the
section, getting process $\mathbf {X}^*$ and $\mathbf {Y}^*$ that have the same law of $\mathbf {X}$
and $\mathbf {Y}$, and that will be identified with them from now on. Notice that the invariance of the
laws under resampling allows to write
$$
\mathbf {X}=\mathbf {X}^{-n,\mathbf {X}^{-n-1}_{-\infty}}, \mathbf {Y}=
\mathbf {X}^{-n,\mathbf {Y}^{-n-1}_{-\infty}}.
$$

Since $N=1$ a.s., for any finite window of sites $[l,L]=\{i \in
\mathbb{Z}: l\leq i \leq L\}$, with $-\infty <l<L<\infty$, the root
of the smallest subtree of $\widetilde{\Gamma}_r$ containing $[l,L]$
is a proper random variable $T_{[l,L]}$.

For any function
$f:G^{L-l+1}\to \mathbb{R}$, with $||f||_{\infty} \leq 1$ we write
\begin{equation*}
E\{f(\mathbf{X}^L_l)-f(\mathbf{Y}^L_l)\}=E\{1_{\{T_{[l,
L]}>-\frac{n}{2}\}}(f(\mathbf{X}^L_l)-f(\mathbf{Y}^L_l))\}+E\{1_{\{T_{[l,
L]}\leq-\frac{n}{2}\}}(f(\mathbf{X}^L_l)-f(\mathbf{Y}^L_l))\}
\end{equation*}
\begin{equation}\label{ineq2}
\leq E\{1_{\{T_{[l,
L]}>-\frac{n}{2}\}}(f(\mathbf{X}^L_l)-f(\mathbf{Y}^L_l))\}+\frac {\varepsilon}{2}
\end{equation}
for any $\varepsilon>0$, for $n$ sufficiently large, since $T_{[l,
L]}$ is finite a.s.

Observe that $T_{[l,L]}$ is
$\mathcal{F}^L_{T_{[l,L]}+1}$--measurable, and
$$
\mathbf {X}_l^L=X_{T_{[l,L]}}\mathbf {Z}_l^L, \quad \mathbf{Y}^L_l=Y_{T_{[l,L]}}\mathbf
{Z}_l^L,
$$
where $\mathbf {Z}_l^L$ is an $(L-l+1)$-dimensional
$\mathcal{F}^L_{T_{[l,L]}+1}$--measurable random vector: moreover
$X_{T_{[l,L]}}$ and $Y_{T_{[l,L]}}$ are independent of
$\mathcal{F}_{T_{[l,L]}+1}^L$. Therefore
\begin{equation}\label{ineq3}
E\{1_{\{T_{[l,L]}>-\frac{n}{2}\}}(f(\mathbf{X}^L_l)-f(\mathbf{Y}^L_l))\}=
E\{1_{\{T_{[l,L]}>-\frac{n}{2}\}}E(f(\mathbf{X}^L_l)-f(\mathbf{Y}^L_l)|\mathcal{F}^L_{T_{[l,L]}+1})\}.
\end{equation}
Finally, setting $f_{\mathbf {Z}_l^L}(g)=f(g \times \mathbf {Z}_l^L)$, for $g \in
G$, we have
\begin{equation}\label{ineq4}
E(f(\mathbf{X}^L_l)-f(\mathbf{Y}^L_l)|\mathcal{F}^L_{T_{[l,L]}+1})=\mathbb{E}\{f_{\mathbf {Z}_l^L}
(X_{T_{[l,L]}}) -f_{\mathbf {Z}_l^L}(Y_{T_{[l,L]}})\},
\end{equation}
where $\mathbb{E}$ means expectation w.r.t. to the joint law of $X_{T_{[l,L]}}$ and $Y_{T_{[l,L]}}$.
Thus, plugging the above expression into \eqref{ineq3}, we get
\begin{equation*}
E\{1_{\{T_{[l,L]}>-\frac{n}{2}\}}(f(\mathbf{X}^L_l)-f(\mathbf{Y}^L_l))\}=\sum_{-n/2<i<l} E(
1_{\{T_{[l,L]}=i\}}\mathbb{E}(f_{\mathbf {Z}_l^L}(X_{i}) -f_{\mathbf {Z}_l^L}(Y_{i})))
\end{equation*}
\begin{equation}\label{ineq5}
\leq \sum_{-n/2<i<l} P(T_{[l,L]}=i)
\sup_{\mathbf {w}_1,\mathbf {w}_2}||\mathcal {L}(X_i^{-n,\mathbf {w}_1})-
\mathcal {L}(X_i^{-n,\mathbf {w}_2})||_{TV}.
\end{equation}
By Lemma \ref{probinzero} and the subsequent remark the total variation term can be made smaller than $\frac
{\varepsilon}{2}$ as $n$ is large enough. Plugging \eqref{ineq5}
into \eqref{ineq2} we thus obtain that the total variation distance between the laws of
$\mathbf{X}^L_l$ and $\mathbf{Y}^L_l$ does not exceed $\varepsilon$, proving the desired result.
\end{proof}

The natural question is under which conditions on the sequence of
coefficients $\mathbf{\theta}$ i is possible to ensure that $N=1$
a.s. or, equivalently, that any two sites, say e.g. $0$ and $-1$,
are a.s. connected by a path in $\widetilde{\Gamma}_r$. We need to
follow the two infinite oriented paths in $\Gamma_r$ starting from
this two sites to check if they coalesce or not. This can be done by
keeping track recursively in time of the distance between the two
paths, using the rule that it is always the rightmost path which is
updated with the addition of an arc. If the rightmost path reaches
the site $i$, it jumps backwards to $i-K_i$, and so on,
independently of the arcs chosen in the past. In this way the
distance between the two paths follows the Markov chain
\begin{equation}\label{vonSchelling}
Y_{n+1}=|Y_n-K_{n+1}|, n \in \mathbb{N}, Y_0=1
\end{equation}
where the $K_i, i=1,2,\ldots$ are i.i.d. with distribution $P(K_1=j)=\theta_j$. This
process is known in the literature as the von Schelling process
(see \cite{Fell}). It evolves as a random walk with negative
increments until it falls below zero, in which case it is reflected
on the positive side. When $\gcd(\mathcal {A}_{\mathbf {\theta}})=1$, this process is irreducible (see
\cite{Boud}). Moreover we have the following result.

\begin{proposition}\label{marchereflechie}
Consider the random graph $(\mathbb{Z}, \mathcal{B})$ with
$\mathcal{B}=\{\{i,i-K_i\},i \in \mathbb{Z}\}$, where the $K_i, i=1,2,\ldots$ are i.i.d. with
distribution $P(K_1=j)=\theta_j$.
Assume $\gcd({k: \theta_k \neq 0})=1$. Then the number of
connected components $N$ of $(\mathbb{Z}, \mathcal{B})$ is a.s. equal to $1$ if and only if the
corresponding von Schelling process is recurrent.
\end{proposition}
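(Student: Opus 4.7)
The plan is to identify $\{N = 1\}$ with a coalescence event for the two infinite oriented paths of $\Gamma_r$ starting from adjacent sites, and to recognize the gap between these paths as a realization of the von Schelling process started from $1$.

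Fix the two sites $0$ and $-1$ and follow the oriented paths in $\Gamma_r$ that they generate, at each step advancing only the path whose current rightmost site is larger. Since every $K_m \geq 1$, each path is strictly decreasing and the $K$-variables consumed along the whole coupled construction are indexed by pairwise distinct sites; the i.i.d.\ assumption on $\{K_i\}$ then guarantees that each update uses a fresh, independent copy of $K_1$. Letting $Y_n$ denote the gap between the two current positions after $n$ updates, one gets $Y_0 = 1$ and $Y_{n+1} = |Y_n - K'_{n+1}|$ with $\{K'_{n+1}\}$ i.i.d.\ $\sim \mathbf{\theta}$, which is precisely the von Schelling process started at $1$. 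Sites $0$ and $-1$ then lie in the same tree of $\widetilde{\Gamma}_r$ exactly on the event $\{\tau_0 < \infty\}$, where $\tau_0 = \inf\{n \geq 1 : Y_n = 0\}$.

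Assuming recurrence, irreducibility of $\{Y_n\}$ (granted by $\gcd(A_{\mathbf{\theta}}) = 1$) yields $P_1(\tau_0 < \infty) = 1$, so $0$ and $-1$ almost surely share the same tree; by stationarity of $\{K_i\}$ and a countable intersection over shifts, every consecutive pair $\{i, i-1\}$ is a.s.\ in the same tree, whence $N = 1$ a.s. Conversely, assume transience and consider $h(k) = P_k(\tau_0 = \infty)$, which is nonnegative and harmonic on $\mathbb{N}_+$ with $h(0) = 0$. If $h(1) = 0$, the identity $0 = \sum_j p(1, j) h(j)$ combined with $h \geq 0$ forces $h(j) = 0$ on every $j$ with $p(1, j) > 0$; iterating along irreducible paths gives $h \equiv 0$, so $P_0(\tau_0^+ < \infty) = \sum_k |\theta_k| P_k(\tau_0 < \infty) = 1$, contradicting transience. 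Hence $P_1(\tau_0 < \infty) < 1$ and $P(N \geq 2) > 0$; since $N$ is $\mathcal{F}_{-\infty}$-measurable with i.i.d.\ driving variables, the Kolmogorov zero-one law upgrades this to $N \geq 2$ a.s.

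The main obstacle is making the independence step in the coupling construction rigorous: verifying that the gap process is genuinely a von Schelling chain driven by fresh i.i.d.\ increments requires careful bookkeeping of which sites have been visited and hence which $K_m$'s have been used (a small strong-Markov/filtration argument). A secondary delicate point is the harmonic-propagation step that converts transience into the strict inequality $P_1(\tau_0 < \infty) < 1$, which relies on nonnegativity of $h$ and on irreducibility to spread the zero throughout the state space.
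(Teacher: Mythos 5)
Your overall route---identifying $\{N=1\}$ with coalescence of the forward paths from $0$ and $-1$, recognizing the gap process as the von Schelling chain started at $1$, and splitting into the recurrent and transient cases---is the same as the paper's, and the reduction to coalescence, the bookkeeping showing the increments are fresh i.i.d.\ copies, the recurrent direction, and the final zero--one law step are all sound. The genuine gap is the harmonic-propagation step in the transient direction. The identity $h(k)=\sum_j p(k,j)h(j)$ holds only for $k\neq 0$, so from $h(1)=0$ and $h\geq 0$ you can push the zero only along positive-probability paths from $1$ whose intermediate states avoid $0$; ``iterating along irreducible paths'' is not available, because those paths may be forced through $0$, where the harmonicity relation fails. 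Concretely, take $A_{\mathbf{\theta}}=\{2,3\}$: from $1$ the chain moves to $|1-2|=1$ or $|1-3|=2$, and from $2$ to $0$ or $1$, so the zero propagates only over $\{0,1,2\}$ and never reaches the state $3\in A_{\mathbf{\theta}}$; yet recurrence of $0$ is precisely the statement $\sum_{k\in A_{\mathbf{\theta}}}|\theta_k|\,P_k(\tau_0<\infty)=1$, which requires $h(3)=0$. (In this example $h(3)=0$ does follow, but from the extra observation that all one-step successors of $3$ lie in $\{0,1\}$, not from your propagation.) Note also that the implication ``$P_1(\tau_0<\infty)=1\Rightarrow$ recurrent'' is false for general irreducible chains, e.g.\ a walk on $\mathbb{Z}$ drifting towards $-\infty$, so some structure specific to this setting must be used.

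The repair is cheap and stays entirely inside your framework: run the identical coalescence argument for the pair $(0,-k)$, whose gap process is the von Schelling chain started at $k$. Then $N=1$ a.s.\ forces \emph{all} pairs of sites into one tree, hence $P_k(\tau_0<\infty)=1$ for every $k\geq 1$, and therefore $P_0(\tau_0^+<\infty)=\sum_k|\theta_k|P_k(\tau_0<\infty)=1$, i.e.\ recurrence. Equivalently, transience yields some $k\in A_{\mathbf{\theta}}$ with $P_k(\tau_0=\infty)>0$, hence $P(0\not\sim -k)>0$, so $P(N\geq 2)>0$ and your zero--one law step finishes. The paper's own proof is a two-sentence sketch that asserts the equivalence ``by irreducibility''; the graph-based boost above is the honest way to fill it in, whereas the harmonic-function route as you wrote it does not close.
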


\begin{proof} It is clear that $N=1$ a.s. if and only if the state $0$ is hit a.s.
when the von Schilling process is started in $1$. By irreducibility,
this happens if and only if the process is recurrent.
\end{proof}

An invariant measure for the von Schelling process was written down
in \cite{Boud}. Whenever it has finite total mass then the process is positive recurrent.
This happens whenever $E(K_0)=\sum_{k=1}^{\infty} P(K_0
\geq k)<+\infty$. Recently some null recurrent cases have been covered
by a result in \cite{PW11}, which is reported below.
Incidentally, in the cited paper it is also given a class
of distributions with very heavy tails for which the von
Schelling process is transient.

\begin{theorem}\label{Woess} If $\sum_{k=1}^{\infty} P(K_0 \geq k)^2 <+\infty$,
then the von Schelling process is recurrent.
\end{theorem}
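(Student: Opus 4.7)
The strategy is to establish recurrence of the irreducible Markov chain $Y_n=|Y_{n-1}-K_n|$ via a Foster--Lyapunov criterion. Since irreducibility is guaranteed by $\gcd A_{\mathbf{\theta}}=1$, it suffices to exhibit a function $V$ on the state space with $V(i)\to\infty$ and drift $E[V(|i-K_0|)]-V(i)\leq 0$ for all $i$ outside a finite set.

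First I would recast the hypothesis in a transparent form: by Fubini,
\begin{equation*}
\sum_{k=1}^{\infty} P(K_0\geq k)^2 \;=\; E[\min(K_0,K_0')],
\end{equation*}
with $K_0'$ an independent copy of $K_0$. This makes explicit the gap between the classical positive-recurrence case $E[K_0]<\infty$ (where the invariant measure of \cite{Boud} has finite mass) and the regime where even the mean of $K_0$ is infinite: the chain can still be recurrent provided only that the minimum of two independent samples has finite mean. In this intermediate regime the naive choice $V(i)=i$ is hopeless, since the drift from state $i$ is essentially $-E[K_0\wedge i]$ plus an overshoot contribution that is not even summable in $i$.

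The drift splits as
\begin{equation*}
E[V(|i-K_0|)]-V(i) \;=\; -\sum_{k=1}^{i}\theta_k\bigl(V(i)-V(i-k)\bigr)+\sum_{k=i+1}^{\infty}\theta_k\bigl(V(k-i)-V(i)\bigr),
\end{equation*}
a negative downward contribution and a positive overshoot. The plan is to pick a concave, sublinear $V$ matched to the tail of $K_0$, so that Abel summation converts the overshoot into a double tail sum controlled by $\sum_k P(K_0\geq k)^2$, while the downward piece still diverges to $-\infty$ as $i\to\infty$. The main obstacle is to make this comparison uniform in $i$, simultaneously absorbing the boundary jump $\theta_i V(0)$ (from the transition $i\mapsto 0$) and the extreme-tail piece coming from very large values of $K_0$, relying only on the $\ell^2$ summability and not on $E[K_0]<\infty$. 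The bookkeeping is delicate but essentially routine once a $V$ of the right shape, such as a concave modulus of the tail function, is identified; a conceptually cleaner alternative, possibly that of \cite{PW11}, would be to estimate the return probability $P_0(Y_n=0)$ directly via a local-limit-theorem analysis of the reflected walk and verify $\sum_n P_0(Y_n=0)=+\infty$.
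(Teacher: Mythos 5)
First, a point of context: the paper does not prove this statement at all. It is imported verbatim from Peign\'e and Woess \cite{PW11} (``Recently some null recurrent cases have been covered by a result in \cite{PW11}, which is reported below''), so there is no in-paper argument to compare yours against; the theorem is a genuinely nontrivial external result about recurrence of the reflected random walk $Y_{n+1}=|Y_n-K_{n+1}|$ in the regime where $E[K_0]$ may be infinite.

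As a proof, your proposal has a real gap: it is a strategy outline whose decisive step is missing. The Foster--Lyapunov framing is legitimate, and the identity $\sum_k P(K_0\geq k)^2=E[\min(K_0,K_0')]$ is correct and genuinely illuminating, but you never exhibit the function $V$, never prove the drift inequality, and never show how the overshoot term $\sum_{k>i}P(K_0=k)\bigl(V(k-i)-V(i)\bigr)$ is actually dominated by the downward term uniformly in large $i$. The sentence ``the bookkeeping is delicate but essentially routine once a $V$ of the right shape \dots is identified'' concedes exactly the point that constitutes the theorem: under the stated hypothesis the tail $P(K_0\geq k)$ can decay as slowly as $k^{-1/2-\epsilon}$, so conditionally on $\{K_0>i\}$ the overshoot $K_0-i$ is typically of order much larger than $i$, and it is far from clear that any concave sublinear $V$ (logarithmic, a concave modulus of the tail, or otherwise) makes the positive contribution summably small relative to the negative one; indeed, naive choices fail, which is presumably why Peign\'e and Woess prove the result through local contractivity of the associated stochastic dynamical system rather than by a drift condition. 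The alternative you float (a local limit theorem for the reflected walk yielding $\sum_n P_0(Y_n=0)=+\infty$) is likewise only a gesture. In short: correct reformulation of the hypothesis, plausible plan, but the proof itself is not there.
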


By putting together the two Theorems \ref{unicodet} and \ref{Woess} and Proposition
\ref{marchereflechie} we have finally obtained the promised Theorem \ref{unico}.

We finally remark that  $N =1 $ can be seen as a percolation type
condition. For other examples of percolation techniques applied to
perfect simulation in dimensions higher than one the reader is
referred to \cite{DP}.

\section{Perfect simulation and conclusions}

In the following we give a possible implementation of the simulation of the
unique $p^{\mathbf {\theta}}$-compatible process
in a finite window $[l,L]$.For the sake of simplicity
and without any loss of generality we have set $l=1$.

{\em Simulation algorithm for producing $\mathbf {X}=(X(1), \ldots, X(L))$}

\begin{itemize}

\item[1.] For $i=1,...,L$, set $X(i)=1, S(i)=i, Y(i)=1$ and $Z(i,j)=\delta_{i,j}, j=1,\ldots,L$.
\item[2.] Draw $K$ from the distribution $\mathbf {\theta}$.
\item[3.] Compute $J=\hbox {argmax} \{S(i)Y(i)\}$.
\item[4.] Set $S(J)\leftarrow S(J)-K$ and $X(J) \leftarrow X(J)\times \hbox {sign}(\theta_K)$.
\item[5.] If $S(J)\neq S(i), i\neq J$ go to 2.
\item[6.] Else let $I$ s.t. $S(I)=S(J)$, set $Y(J)=0$ and $Z(J,I)=X(J) \times X(I)$.
\item[7.] If $\sum_{i=1}^LY(i)>1$, go to 2.
\item[8.] Else let $H$ such that $Y(H)=1$, and set $X(i)=0, i \neq H$, drawing $X(H)$ as a symmetric Bernoulli sign.
\item[9.] Output $\mathbf {X} \leftarrow 
Z^L\mathbf {X}$, the product of the matrix $Z^L$ with
the column vector $\mathbf {X}$.

\end{itemize}

A few words of explanation. The vector $\mathbf{S}=(S(1), \ldots,
S(L))$ contains the positions of particles started from the sites of
the interval $[1,L]$, performing independent random walks until
coalescence. These particles will be labeled with their starting
sites. Their displacements are drawn from the distribution
$\mathbf{\theta}$; at each step it is always the rightmost particle
which moves to the left. The vector $\mathbf{X}=(X(1), \ldots,
X(L))$ keeps track of the current sign of the paths followed by the
particles. When particle $J$ lands in a site already occupied by
particle $I$ we set to $0$ the $J$-th value of the auxiliary vector
$\mathbf{Y}=(Y(1), \ldots, Y(L))$. This means that particle $J$
remains \textquotedblleft freezed" in that site and does not play a
further role, aside from the fact that in the auxiliary $L \times L$
matrix $Z$ the $(J,I)$-th element registers if the paths followed by
the two particles until coalescence have or not the same sign. This
agreement or disagreement will be kept in the final vectors of
signs. When the next to the last particle lands in the site
$T_{[1,L]}$ occupied by the last one, say particle $H$, by computing
the $L$-th power of the matrix $Z$ it is possible to reconstruct the
agreement or disagreement of signs between all the particles.
Finally we have slightly modified the construction presented in the
last section by drawing from a symmetric Bernoulli the sign $X(H)$
assigned to the $H$-th particle rather than the sign of a fixed
particle: this clearly will not change the final distribution at
all. All the elements of the desired vector of signs can then be
obtained by multiplying $Z^L$ for a column vector with all zeros
except $X(H)$ in the $H$-th position.

With a few changes, the algorithm can be adapted to the biased case
in which the term $\frac {\theta_0}{2}$ is added to the r.h.s. of
(\ref{specif}). Recall that in this case
$r_0=\sum_{k=1}^{\infty}|\theta_k|<1$ so that the mass $1-r_0$ is
placed on $0$ to get a mass distribution on the integers $\mathbf
{\theta}^*$ which replaces $\mathbf {\theta}$. Now at each step
there is a positive probability that a particle \textquotedblleft
gets stuck" in a site. When the $J$-th particle stops its sign is
sampled from the distribution of a binary random variable $U$ with
$P(U=\pm 1)=\frac {1-r_0\pm \theta_0}{2(1-r_0)}$. This is sampled
independently for particles stuck in different positions. The final
vector of signs is obtained by post-multiplying $Z^L$ for the column
vector containing the signs drawn for the stuck particles and zeros
in the other positions. Such an algorithm is a slight modification,
in order to profit of coalescence, to the one described in
\cite{CFF}.

However, in the latter paper, a general purpose \textquotedblleft
coupling from the past" algorithm was described, based on general
bounds on the \textquotedblleft dependence of the kernel on the
past". For this general algorithm to work,  in the case
$\sum_{k=1}^{\infty}|\theta_k|<1$, a control on the tails of the
distribution $\mathbf {\theta}^*$ is needed, which is quite stronger
than that required by Theorem \ref{Woess}. We should also mention
that in \cite{DP2} a modification of such an algorithm is proposed,
which works also for an example in which $\theta_0=0$ and
$\sum_{k=1}^{\infty}|\theta_k|=1$. But such a modification was
rather ad hoc, and it was not clear how to pursue the same strategy
for a generic vector $\mathbf {\theta}$ with
$\sum_{k=1}^{\infty}|\theta_k|=1$.

It is worth to mention that in the case $N>1$ a.s. it is still
possible to produce a compatible process by repeating independently
for each tree of the graph $(\mathbb{Z}, \mathcal{B})$ the random
assignment of sign to the two sets of the partition described above.
But the uniqueness proof as well as the perfect simulation algorithm
given here obviously fails, even if uniqueness can still hold.

Finally, in dimension higher than one, the reader is referred to
\cite{GLO,DLiss} for perfect simulation of infinite range Gibbs
fields.

\medskip\medskip\medskip\medskip

\bibliographystyle{plain}


\end{document}